\newcommand{\CC}{\mathbb C}
\newcommand{\NN}{\mathbb N}
\newcommand{\map}{\rightarrow}
\def\C{\textup{C}}
\def\id{\mathrm{id}}
\def\K{\textup{K}}
\def\sC{\text{$\sigma$-$C^*$}}
\def\lim{{\varprojlim}}
\def\prot{\hat{\otimes}}
\def\dlim{\varinjlim}
\def\ilim{\varprojlim}
\newcommand{\beq}{\begin{eqnarray}}
\newcommand{\beqn}{\begin{eqnarray*}}
\newcommand{\eeq}{\end{eqnarray}}
\newcommand{\eeqn}{\end{eqnarray*}}
\newtheorem*{thm}{Theorem}
\newtheorem*{lem}{Lemma}
\newtheorem*{defn}{Definition}
\newtheorem*{rem}{Remark}
\newtheorem*{ex}{Example}
\begin{document}

\title{Operator algebra quantum groups\\ of universal gauge groups}
\author{Snigdhayan Mahanta}
\email{snigdhayan.mahanta@adelaide.edu.au}
\author{Varghese Mathai}
\email{mathai.varghese@adelaide.edu.au}
\address{Department of Pure Mathematics,
University of Adelaide,
Adelaide, SA 5005, 
Australia}

\thanks{{\em Acknowledgements}. Both authors gratefully acknowledge support under the Australian Research Council's {\em Discovery Projects} funding scheme.}

\keywords{$\sC$-quantum groups, universal gauge groups, operator algebras}

\subjclass[2010]{58B32, 58B34, 46L65}


\begin{abstract}
In this paper, we quantize universal gauge groups such as $SU(\infty)$, in the $\sC$-algebra setting. More precisely, we propose a concise definition of $\sC$-quantum groups and explain the concept here. At the same time, we put this definition in the mathematical context of countably compactly generated groups as well as $C^*$-compact quantum groups.
\end{abstract}

\maketitle

If $H$ is a compact and Hausdorff topological group, then the $C^*$-algebra of all continuous functions $\C(H)$ admits a comultiplication map $\Delta: \C(H)\map \C(H)\prot\C(H)$ arising from the multiplication in $H$. This observation motivated Woronowicz (see, for instance, \cite{W}), amongst others such as Soibelman \cite{S}, to introduce the notion of a {\it $C^*$-compact quantum group} in the setting of operator algebras as a unital $C^*$-algebra with a coassociative comultiplication, satisfying a few other conditions.  If the group $H$ is only locally compact then the situation becomes significantly more difficult. One of the reasons is that the multiplication map $m:H\times H\map H$ is no longer a proper map and one needs to introduce multiplier algebras of $C^*$-algebras to obtain a comultiplication, see for instance, Kustermans-Vaes \cite{KV}. For an excellent and thorough introduction to this theory the readers are referred to, for instance, \cite{KVVVDW}. In the sequel we show that if $H=\dlim_n H_n$ is a countably compactly generated group, i.e., if $H_n\subset H_{n+1}$ are compact and Hausdorff topological groups for all $n\in\NN$ and if $H$ is the direct limit, then a story similar to the compact group case goes through using the general framework of $\sC$-algebras as systematically developed by Phillips \cite{P1,P2}, motivated by some earlier work by Arveson, Mallios, Voiculescu, amongst others. There is a clean formulation of, what we call, {\it $\sC$-quantum groups}, which are noncommutative generalizations of $\C(H)$. Examples of countably compactly generated groups are $U(\infty)=\dlim_n U(n)$, $SU(\infty) =\dlim_n SU(n)$, where $U(n)$ (resp. $SU(n)$) are the unitary (resp. special unitary) groups. They are also known in the physics literature as {\em universal gauge groups}, see Harvey-Moore \cite{HM} and Carey-Mickelsson \cite{CM}. Such spaces are not locally compact and hence the existing literature on quantum groups cannot handle them. Moreover, locally compact groups that are not compact, are also not countably compactly generated. We also discuss in detail the interesting example of the quantum version of the universal special unitary group, $C(SU_q(\infty))$. 

A {\it pro $C^*$-algebra} is an inverse limit of $C^*$-algebras and $*$-homomorphisms, where the inverse limit is constructed inside the category of all topological $*$-algebras and continuous $*$-homomorphisms. For the general theory of topological $*$-algebras one may refer to, for instance, \cite{M}. The underlying topological $*$-algebra of a pro $C^*$-algebra is necessarily complete and Hausdorff. It is not a $C^*$-algebra in general; it would be so if, for instance, the directed set is finite. If the directed set is countable, then the inverse limit is called a {\it $\sC$-algebra}. One can choose a linearly directed cofinal subset inside any countable directed set and the passage to a cofinal subset does not change the inverse limit. Therefore, we shall always identify a $\sC$-algebra $A\cong\ilim_n A_n$, where $n\in\NN$. The inverse limit could have also been constructed inside the category of $C^*$-algebras; however, the two results will not agree. For instance, if $H=\dlim_n H_n$ as above, then the inverse limit $\ilim_n \C(H_n)$ inside the category of topological $*$-algebras is $\C(H)$, whereas that inside the category of $C^*$-algebras is $\C_b(H)$, i.e., the norm bounded functions on $H$. It is known that $\C_b(H)\cong\C(\beta H)$, where $\beta H$ is the Stone--{\v{C}}ech compactification of $H$. Therefore, if one wants to model a space via its algebra of all continuous functions then the former inverse limit is the appropriate one. {\em Henceforth, the inverse limits are always constructed inside the category of topological $*$-algebras}. It is known that any $*$-homomorphism between two pro $C^*$-algebras is automatically continuous, provided the domain is a $\sC$-algebra (see Theorem 5.2. of \cite{P1}). Furthermore, the category of commutative and unital $\sC$-algebras with unital $*$-homomorphisms (automatically continuous) is contravariantly equivalent to the category of countably compactly generated and Hausdorff spaces with continuous maps via the functor $X\mapsto \C(X)$ (see Proposition 5.7. of \cite{P1}). If $A\cong\ilim_n A_n$, $B\cong\ilim_n B_n$ are two $\sC$-algebras, then the minimal tensor product is defined to be $A\prot_{\textup{min}} B = \ilim_n A_n\prot_{\textup{min}} B_n$. {\em Henceforth, $A\prot B$ will always denote the minimal or spatial tensor product between $\sC$-algebras}.

If $H$ is a countably compactly generated and Hausdorff topological group, although the multiplication map $m:H\times H\map H$ is not proper, we get an induced comultiplication map $m^*:\C(H)\map\C(H\times H)\cong\C(H)\prot\C(H)$, which will be coassociative owing to the associativity of $m$. Motivated by the definition of Woronowicz (see also Definition 1 of \cite{KV}), we propose:

\begin{defn}
A unital $\sC$-algebra $A$ is called a $\sC$-quantum group if there is a unital $*$-homomorphism $\Delta: A\map A\prot A$ which satisfies coassociativity, i.e., $(\Delta\prot\id)\Delta = (\id\prot\Delta)\Delta$ and such that the linear spaces $\Delta(A)(A\prot 1)$ and $\Delta(A)(1\prot A)$ are dense in $A\prot A$.
\end{defn}

\begin{lem} 
Let $\{A_n,\theta_n: A_n\map A_{n-1}\}_{n\in\NN}$ be a countable inverse system of $C^*$-algebras and let $B_n\subset A_n$ be dense subsets for all $n$ such that $\theta_n(B_n)\subset B_{n-1}$. Then $\ilim_n B_n$ is a dense subset of the $\sC$-algebra $\ilim_n A_n$.
\end{lem}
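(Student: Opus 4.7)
The topology on $A := \ilim_n A_n$ is generated by the countable family of $C^*$-seminorms $p_n(a) := \|\pi_n(a)\|_{A_n}$, where $\pi_n: A \map A_n$ is the canonical projection. Each $\theta_n$, being a $*$-homomorphism between $C^*$-algebras, is contractive, so the seminorms are monotone increasing: $p_{n-1} \le p_n$. A basic open neighborhood of any $a \in A$ can therefore be taken in the form $\{x \in A : p_N(x - a) < \epsilon\}$ for some $N \in \NN$ and $\epsilon > 0$, and density of $\ilim_n B_n$ in $A$ is equivalent to the following local statement: for every $a = (a_n) \in A$, every $N \in \NN$, and every $\epsilon > 0$, there exists a compatible family $(b_n) \in \ilim_n B_n$ satisfying $\|b_N - a_N\|_{A_N} < \epsilon$.

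The plan is to build such a family in two parts. Use the density of $B_N$ in $A_N$ to pick $b_N \in B_N$ with $\|b_N - a_N\|_{A_N} < \epsilon$. The downward coordinates $n < N$ are then determined by $b_n := (\theta_{n+1}\circ\cdots\circ\theta_N)(b_N)$, and the hypothesis $\theta_m(B_m) \subset B_{m-1}$, applied iteratively, keeps each $b_n$ inside $B_n$; the resulting tuple $(b_0, \ldots, b_N)$ is compatible by construction. The upward coordinates $n > N$ are produced inductively: assuming $b_n \in B_n$ has been constructed, choose $b_{n+1} \in B_{n+1}$ with $\theta_{n+1}(b_{n+1}) = b_n$, and use countable choice to assemble the full compatible sequence $(b_n) \in \ilim_n B_n$. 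The estimate at level $N$ then follows from the initial choice.

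The main obstacle is the exact upward lifting: since $B_{n+1}$ is only assumed dense (not closed) in $A_{n+1}$, an exact preimage of $b_n$ under $\theta_{n+1}$ need not lie in $B_{n+1}$ a priori, even if one reduces, via Phillips' representation (\cite{P1}), to the case where the bonding maps $\theta_n$ are surjective. My approach is to bypass this by not fixing the initial $b_N$: instead, one runs a diagonal refinement. Pick, for each $k \ge N$, an element $c_k \in B_k$ with $\|c_k - a_k\|_{A_k} < \epsilon \cdot 2^{-(k - N + 1)}$, and set $b_n^{(k)} := (\theta_{n+1} \circ \cdots \circ \theta_k)(c_k) \in B_n$. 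Using the contractivity of each $\theta_m$, one verifies that $(b_n^{(k)})_k$ is Cauchy in $(A_n, \|\cdot\|)$, that the coordinate limits $b_n := \lim_k b_n^{(k)}$ satisfy $\theta_n(b_n) = b_{n-1}$ by continuity, and that $\|b_N - a_N\| < \epsilon$.

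The delicate remaining point is to ensure that each coordinate limit $b_n$ actually lies in $B_n$ rather than only in its closure $A_n$. The plan is to arrange the construction so that at each fixed level the approximations become stationary after finitely many stages: at stage $k$, one first seeks an exact lift inside $B_{k+1}$ and, when one is found, freezes all lower-level coordinates; when only an approximate lift is available, one perturbs downward by a summable correction and continues. Combined with the observation that the compositions $\theta_{n+1}\circ\cdots\circ\theta_k$ have dense image in $A_n$ whenever the $\theta_m$'s are surjective (so that exact lifts can be sought within ever denser subsets of $B_n$), this yields the desired element of $\ilim_n B_n$.
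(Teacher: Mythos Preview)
Your reduction in the first paragraph is correct, and the two-sided construction in the second paragraph is the right idea; you also correctly isolate the real obstacle, namely that without surjectivity of the restricted maps $\theta_n|_{B_n}: B_n \to B_{n-1}$ there is no reason an exact lift of $b_n$ should exist inside $B_{n+1}$. The problem is that your attempted repair in the last two paragraphs does not close this gap. The Cauchy construction of paragraph three only produces an element of $\ilim_n A_n$, as you acknowledge, and the ``stationary after finitely many stages'' scheme of paragraph four is a hope rather than an argument: you give no mechanism guaranteeing that an exact lift inside $B_{k+1}$ is ever found, and if only approximate lifts are available infinitely often then the lower coordinates never stabilize and you are back to a limit lying only in $\overline{B_n} = A_n$.

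In fact no repair is possible, because the lemma as literally stated is false. Take $A_n = \CC$ with $\theta_n = \id$, enumerate $\QQ + i\QQ$ as $q_1, q_2, \ldots$, and set $B_n = \{q_k : k \ge n\}$. Each $B_n$ is dense in $\CC$ and $\theta_n(B_n) = B_n \subset B_{n-1}$, yet $\ilim_n B_n = \bigcap_n B_n = \emptyset$, which is not dense in $\ilim_n A_n \cong \CC$. The Bourbaki corollary the paper invokes requires the restricted bonding maps to be \emph{onto}, i.e.\ $\theta_n(B_n) = B_{n-1}$; this stronger hypothesis does hold in both places the paper actually applies the lemma (the Example and the proof of the Theorem, where the $\theta_n$ are surjective $*$-homomorphisms and the $B_n$ are the relevant linear spans or images of free $*$-algebras), and under it your second-paragraph argument already succeeds with no further work: choose $b_N \in B_N$ with $\|b_N - a_N\| < \epsilon$, push down, and lift upward step by step using surjectivity of $\theta_{n+1}|_{B_{n+1}}$.
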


\begin{proof}
The assertion follows from the Corollary to Proposition 9 in \S 4-4 of \cite{Bou}.
\end{proof}

\begin{ex}
Obviously, any $C^*$-compact quantum group is a $\sC$-quantum group. Let $\{A_n,\theta_n: A_n\map A_{n-1}\}_{n\in\NN}$ be a countable inverse system of $C^*$-compact quantum groups with $\theta_n$ surjective and unital for all $n$. Furthermore, let us assume that the comultiplication homomorphisms $\Delta_n$ form a morphism of inverse systems of $C^*$-algebras $\{\Delta_n\}:\{A_n\}\map\{A_n\prot A_n\}$. Then $(A,\Delta)=(\ilim_n A_n,\ilim_n\Delta_n)$ is a $\sC$-quantum group. Indeed, the density of the linear spaces $\Delta(A)(A\prot 1)$ and $\Delta(A)(1\prot A)$ inside $A\prot A$ follow from the above Lemma.
\end{ex}

Our next goal is to outline the construction of the quantum universal special unitary group, $C(SU_q(\infty))$.
Recall that for $q\in (0,1)$, the $C^*$-algebra $C(SU_{q}(n))$ is the universal $C^*$-algebra 
generated by $n^{2}+2$ elements $G_n:=\{u^n_{ij}: \, i, j =1, \ldots , n\}\cup\{0,1\}$, which satisfy the following
relations

\begin{equation}\label{relations0}
0^*=0^2=0,\;\;1^*=1^2=1, \;\;01=0=10, \;\;1u^n_{ij} = u^n_{ij}1 = u^n_{ij},\;\; 0u^n_{ij}=u^n_{ij}0=0 \text{  for all $i,j$}
\end{equation}

\begin{equation}\label{relations1}
 \sum_{k=1}^{n}u^n_{ik}(u^{n}_{jk})^*=\delta_{ij}1, \qquad \sum_{k=1}^{n}(u^{n}_{ki})^*u^n_{kj}
=\delta_{ij}1
\end{equation}
\begin{equation}\label{relations2}
\sum_{i_{1}=1}^{n}\sum_{i_{2}=1}^{n} \cdots \sum_{i_{n}=1}^{n}
E_{i_{1}i_{2}\cdots i_{n}}u^n_{j_{1}i_{1}}\cdots u^n_{j_{n}i_{n}} =
E_{j_{1}j_{2}\cdots j_{n}}1
\end{equation}
where
\begin{displaymath}
\begin{array}{lll}
E_{i_{1}i_{2}\cdots i_{n}}&:=& \left\{\begin{array}{lll}
                                 0 &\text{whenever}& i_{1},i_{2},\cdots, i_{n} \text{ are not
distinct;} \\
                                 (-q)^{\ell(i_{1},i_{2},\cdots,i_{n})}.
                                 \end{array} \right. 
\end{array}
\end{displaymath}
Here $\delta_{ij}1=0$ if $i\neq j$, where $0$ denotes the element in the generating set of $C(SU_q(n))$.
Moreover, $\ell(i_1,i_2,\cdots,i_n)$ denotes the number of inversed pairs in the permutation $(i_1,i_2,\cdots,i_n)$. 
The $C^{*}$-algebra $C(SU_{q}(n))$ has a $C^*$-compact quantum group
structure with the comultiplication $\Delta$ given by 
\begin{displaymath}
\Delta(0):=0\otimes 0,\;\; \Delta(1):=1\otimes 1 \text{  and   } \Delta(u^n_{ij}):= \sum_{k} u^n_{ik}\otimes u^n_{kj}.
\end{displaymath} 
\noindent 
There is a surjective $*$-homomorphism $\theta_n:C(SU_{q}(n)) \to C(SU_{q}(n-1))$ defined on the generators by
 \begin{eqnarray*}
 \theta_n(x) &:=& x \text{  if $x=0,1$}\\
 \theta_n(u^n_{ij}) &:=& \left\{\begin{array}{ll}
                        u^{n-1}_{ij}&\text{if}~ 1\leq i,j \leq n-1 ,\\
                        \delta_{ij}1 & \text{otherwise},
                        \end{array} \right.
\end{eqnarray*} such that the following diagram commutes for all $n\geqslant 2$

\beqn \label{comult}
\xymatrix{
C(SU_q(n))\ar[rrr]^{\Delta_n} \ar[d]_{\theta_n} &&& C(SU_q(n))\prot C(SU_q(n)) \ar[d]^{\theta_n\prot\theta_n}\\
C(SU_q(n-1))\ar[rrr]^{\Delta_{n-1}} &&& C(SU_q(n-1))\prot C(SU_q(n-1)).
}
\eeqn One can verify this assertion by a routine calculation on the generators. Consequently, for $n\geqslant 2$ the families $\{C(SU_{q}(n)), \theta_n\}$ and $\{C(SU_q(n))\prot C(SU_q(n)),\theta_n\prot\theta_n\}$ form countable inverse systems of $C^*$-algebras and $\{\Delta_n\}:\{C(SU_q(n))\}{\map} \{C(SU_q(n))\prot C(SU_q(n))\}$ becomes a morphism of inverse systems of $C^*$-algebras. We construct the underlying $\sC$-algebra of the universal quantum gauge group as the inverse limit $$C(SU_q(\infty))=\ilim_n C(SU_{q}(n)).$$ In fact, $C(SU_{q}(\infty))$ is a $\sC$-quantum group, since it is the inverse limit of $C^*$-compact quantum groups, where the comultiplication $\Delta$ on $C(SU_{q}(\infty))$ is defined to be $\Delta =\ilim_n \Delta_n$ (see the Example above).

%
%
%
%
%
%

If $G$ is a set of generators and $R$ a set of relations, such that the pair $(G,R)$ is {\it admissible} (see Definition 1.1. of \cite{B}), then one can always construct a universal $C^*$-algebra $C^*(G,R)$. For instance, the universal $C^*$-algebra generated by the set $\{1,x\}$, subject to the relations $\{1^*=1^2=1$, $1x=x1=x$, $x^* x= 1=xx^*\}$, is isomorphic to $\C(S^1)$. The generators and relations of $C(SU_q(n))$ described above are also admissible. 

\begin{rem}
All matrix $C^*$-compact quantum groups considered, for instance, in \cite{W1,W}, such that the relations put a bound on the norm of each generator, are of the form $C^*(G,R)$, where $(G,R)$ is an admissible pair of generators and relations.
\end{rem}

Let $\{(G_i, R_i)\}_{i\in\NN}$ be a countable family of admissible pairs of generators and relations, so that $C^*(G_i, R_i)$ exist for all $i$. Let $F(G)$ denote the associative nonunital complex $*$-algebra (freely) generated by the concatenation of the elements of $G\coprod G^*$ and finite $\CC$-linear combinations thereof, where $\coprod$ denotes disjoint union and $G^*=\{g^*\,|\, g\in G\}$ (formal adjoints). We call a relation in $R$ {\it algebraic} if it is of the form $f=0$ (or can be brought to that form), where $f\in F(G)$. For instance, if $G=\{1,x\}$, then $x^*x=1$ is algebraic, whereas $\|x\|\leqslant 1$ is not. If $(G,R)$ is a pair of generators and relations, then a {\it representation} $\rho$ of $(G,R)$ in a (pro) $C^*$-algebra $B$ is a set map $\rho:G\map B$, such that $\rho(G)$ satisfies the relations $R$ inside $B$. If $(G,R)$ is a {\it weakly admissible} pair of generators and relations (see Definition 1.3.4. of \cite{P2}), then one can construct the universal pro $C^*$-algebra $C^*(G,R)$ (see Proposition 1.3.6. of ibid.). It is known that any combination (even the empty set) of algebraic relations is weakly admissible (see Example 1.3.5.(1) of ibid.).

\noindent
We further make the following hypotheses:

\begin{enumerate}[(a)]
\item \label{Hypa} There are surjective maps $\theta_i: G_i\map G_{i-1}$, so that one may form the inverse limit in the category of sets $G=\ilim_i G_i$, with canonical projection maps $p_i:G\map G_i$. We also require the surjections $\theta_i$ to admit sections $s_{i-1}:G_{i-1}\map G_i$ satisfying $\theta_i\circ s_{i-1}=\id_{G_{i-1}}$, so that we get canonical splittings $\gamma_i: G_i\map G$ satisfying $p_i\circ\gamma_i =\id_{G_i}$. The map $\gamma_i$ sends $g_i\map\{h_j\}$, where

\beq \label{split}
h_j = \begin{cases}
g_i \text{  if $j=i$,}\\
\theta_{i-n+1}\circ\cdots\circ \theta_{i} (g_i) \text{  if $j=i-n$, $n>0$,}\\
s_{i+m-1}\circ\cdots\circ s_i (g_i) \text{  if $j=i+m$, $m>0$.}
\end{cases}
\eeq


\item \label{Hypb} We require that for all $i$ the iterated applications of $\theta_j$'s  and $s_k$'s on $G_i$ satisfy $R_i$ for all $j\leqslant i$ and $k\geqslant i$. 



\end{enumerate} The surjective maps $\theta_i$ induce surjective $*$-homomorphisms $\theta_i:C^*(G_i,R_i)\map C^*(G_{i-1},R_{i-1})$; consequently, $\{C^*(G_i,R_i),\theta_i\}_{i\in\NN}$ forms a countable inverse system of $C^*$-algebras. We may form the inverse limit $\ilim_i C^*(G_i,R_i)$, which is by construction a $\sC$-algebra. Let $(G,R)$ be a pair of generators and relations, where $G=\ilim_i G_i$ and $R$ denotes the set of relations $\{\gamma_i(G_i) \text{ satisfies } R_i \text{ for all $i$}\}$. A {\it representation} $\rho$ of $(G,R)$ in a (pro) $C^*$-algebra $B$ is a set map $\rho:G\map B$, such that $\rho\circ\gamma_i(G_i)$ satisfies $R_i$ inside $B$ for all $i$. We assume that $(G,R)$ is a weakly admissible pair, so that one can construct the universal pro $C^*$-algebra $C^*(G,R)$.

\begin{thm}\label{thm:inverselim}
There is an isomorphism of pro $C^*$-algebras $C^*(G,R)\cong \ilim_i C^*(G_i,R_i)$.
\end{thm}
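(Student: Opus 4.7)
The plan is to show that the inverse limit $A := \ilim_i C^*(G_i, R_i)$ satisfies the universal property of $C^*(G, R)$, whence the isomorphism follows by uniqueness of universal objects. First I construct a canonical representation $\rho_0 : G \to A$ by $\rho_0(g) := (g_i)_i$, where $g = (g_i) \in G$ and each component $g_i$ is regarded as an element of $C^*(G_i, R_i)$ via its canonical inclusion. The tuple lies in $A$ because the induced $*$-homomorphism $\theta_i : C^*(G_i, R_i) \to C^*(G_{i-1}, R_{i-1})$ satisfies $\theta_i(g_i) = g_{i-1}$. To verify that $\rho_0$ satisfies $R$, I use that an inverse limit in the category of topological $*$-algebras reflects relations componentwise, so that vanishing in $A$ reduces to vanishing in each $C^*(G_j, R_j)$. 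The $j$-th component of $\rho_0(\gamma_i(g_i))$ is the element $h_j$ from \eqref{split}, and hypothesis (b) is precisely the statement that $\{h_j : g_i \in G_i\} \subset G_j \subset C^*(G_j, R_j)$ satisfies $R_i$. The universal property of $C^*(G, R)$ then yields a unique continuous $*$-homomorphism $\Phi : C^*(G, R) \to A$.

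To prove $\Phi$ is a pro $C^*$-algebra isomorphism, I first establish that its image is dense. Let $\pi_i : A \to C^*(G_i, R_i)$ be the canonical projection and set $D_i := (\pi_i \circ \Phi)(C^*(G, R))$. Since $(\pi_i \circ \Phi)(g) = g_i$ for $g \in G$, the $*$-subalgebra $D_i$ contains $G_i$ and is therefore dense in $C^*(G_i, R_i)$. The compatibility $\theta_i \circ \pi_i = \pi_{i-1}$ of projections with the inverse system gives $\theta_i(D_i) \subseteq D_{i-1}$, so the preceding Lemma produces density of $\ilim_i D_i$ in $A$. A cofinality argument using that the inverse-limit topology on $A$ is detected by finitely many coordinates then upgrades this to density of $\Phi(C^*(G, R))$ in $A$.

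The main obstacle is the injectivity of $\Phi$. The strategy is to show that every continuous $C^*$-seminorm on $C^*(G, R)$ factors through some $\pi_i \circ \Phi$, which forces $\ker \Phi = 0$ since $C^*(G,R)$ is separated by such seminorms. The section structure plays a crucial role: from \eqref{split} one verifies $\gamma_{i+1} \circ s_i = \gamma_i$, and together with hypothesis (b) this lifts each $s_i$ to a $*$-homomorphism $\tilde s_i : C^*(G_i, R_i) \to C^*(G_{i+1}, R_{i+1})$ that is an isometric section of $\theta_{i+1}$. The $\tilde s_i$'s assemble into canonical embeddings $\iota_i : C^*(G_i, R_i) \hookrightarrow A$ with $\pi_i \circ \iota_i = \id$, and the union $\bigcup_i \iota_i(C^*(G_i, R_i))$ is dense in $A$, since any $(a_j) \in A$ is the limit of its truncations $\iota_i(a_i)$ in the inverse-limit topology. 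Given any representation $\rho : G \to B$ of $(G, R)$ into a $C^*$-algebra $B$, universality yields $\phi_i : C^*(G_i, R_i) \to B$ satisfying $\phi_{i+1} \circ \tilde s_i = \phi_i$, producing a $*$-homomorphism $\phi_\infty$ on the dense direct subsystem. The delicate technical step is to show that $\phi_\infty$ extends continuously to all of $A$, using that each $\phi_i$ is norm-decreasing; together with the density already established, this factorization of seminorms through the $\pi_i \circ \Phi$ completes the proof.
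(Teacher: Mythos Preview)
Your route diverges from the paper's. Rather than producing $\Phi : C^*(G,R) \to A$ and then arguing it is bijective, the paper shows directly that $A = \ilim_i C^*(G_i,R_i)$ satisfies the universal property that characterizes $C^*(G,R)$: given any representation $\rho$ of $(G,R)$ in a pro $C^*$-algebra $B$, it constructs the factoring $*$-homomorphism $\kappa : A \to B$. The construction defines $\kappa$ on the dense subset $\ilim_i F(G_i) \subset A$ (invoking the density Lemma) via the maps $\rho\circ\gamma_i$, and then appeals to automatic continuity of $*$-homomorphisms out of $\sC$-algebras. Your map $\phi_\infty$ is this $\kappa$ in disguise, so your injectivity step ends up redoing the paper's main construction---after an unnecessary detour through $\Phi$ and the dense-image argument, both of which become superfluous once $\kappa$ exists.

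There is, moreover, a real gap in your ``delicate technical step.'' That each $\phi_i$ is contractive yields only $\|\phi_\infty(\iota_i(x))\| \le \|x\| = q_i(\iota_i(x))$, with the controlling seminorm $q_i$ depending on the element. Continuity for the inverse-limit topology demands a \emph{fixed} index $j$ with $\|\phi_\infty(a)\| \le q_j(a)$ uniformly on the dense subalgebra; for $a=\iota_k(x)$ with $k>j$ one has $q_j(a)=\|\theta_{j+1}\cdots\theta_k(x)\|$, and nothing in your hypotheses forces $\|\phi_k(x)\|$ to be bounded by this. (Relatedly, your assertion that every continuous $C^*$-seminorm on $C^*(G,R)$ factors through a \emph{single} $\pi_i\circ\Phi$ is stronger than what injectivity requires and is not justified.) The paper circumvents this issue entirely by invoking automatic continuity rather than attempting a seminorm estimate; that is the ingredient your argument is missing.
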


\begin{proof}
It suffices to show that $\ilim_i C^*(G_i,R_i)$ is a universal representation of $(G,R)$, i.e., there is a map $\iota:G\map\ilim_i C^*(G_i,R_i)$ such that $\iota\circ\gamma_i(G_i)$ satisfies $R_i$ inside $\ilim_i C^*(G_i,R_i)$ for all $i$ and given any representation $\rho$ of the pair $(G,R)$ in a pro $C^*$-algebra $B$, there is a unique continuous $*$-homomorphism $\kappa: \ilim_i C^*(G_i,R_i)\map B$ making the following diagram commute:

\beqn
\xymatrix{
G=\ilim_i G_i\ar[rrd]_{\rho}\ar^{\iota}[rr] &&\ilim_i C^*(G_i,R_i) \ar[d]^{\kappa}\\
&& B.
}
\eeqn 
The map $\iota: G\map\ilim_i C^*(G_i,R_i)$ is defined as $g\mapsto \{p_i(g)\}$, which is a representation of $(G,R)$ due to the Hypothesis \eqref{Hypb} above. The construction of the universal pro $C^*$-algebra $C^*(G,R)$ (resp. $C^*$-algebra $C^*(G_i,R_i)$) is defined via a certain Hausdorff completion of $F(G)$ (resp. $F(G_i)$) with respect to representations in pro $C^*$-algebras (resp. $C^*$-algebras) satisfying $R$ (resp. $R_i$). The surjective maps $\theta_i$ induce $*$-homomorphisms $\theta_i: F(G_i)\map F(G_{i-1})$, whence we may construct  the $*$-algebra $\ilim_i F(G_i)$ (purely algebraic inverse limit). By the above Lemma it suffices to define $\kappa$ on coherent sequences of the form $\{w_i\}\in\ilim_i F(G_i)$, which then extends uniquely to a $*$-homomorphism on the entire $\ilim_i C^*(G_i,R_i)$. Thanks to the maps $\rho\circ\gamma_i:G_i\map B$, $\rho$ extends uniquely to a $*$-homomorphism $\ilim_i F(G_i)\map B$. Now there is a unique choice for $\kappa(\{w_i\})$ forced by the compatibility requirement, i.e., $\kappa(\{w_i\})=\rho(\{w_i\})$. By construction $\kappa$ is a $*$-homomorphism and it is automatically continuous, since $\ilim_i C^*(G_i,R_i)$ is a $\sC$-algebra. 
\end{proof}

In the example of $C(SU_q(\infty))$, one could try to define the section maps $s_{n-1}: G_{n-1}\map G_n$ as $$ 0\mapsto 0,\;\; 1\mapsto 1,\;\; u^{n-1}_{ij}\mapsto u^n_{ij}.$$ But the Hypothesis \eqref{Hypb} will not be satisfied and hence the above Theorem is unfortunately not applicable. However, the Theorem could be of independent interest as it can be applied to inverse systems, where the structure $*$-homomorphisms admit sections (also $*$-homomorphisms).

\noindent
Let $G_n:=\{w^n_{ij}: \, i, j =1, \ldots , n\}\cup\{0,1\}$ be a set of generators satisfying the relations $R_n$ $$0^*=0^2=0,\;\;1^*=1^2=1, \;\;01=0=10, \;\;1w^n_{ij} = w^n_{ij}1 = w^n_{ij},\;\; 0w^n_{ij}=w^n_{ij}0=0,\;\, \|w^n_{ij}\|\leqslant 1$$ for all $i,j$. The pair $(G_n,R_n)$ is an admissible pair for all $n$, so that there is a universal $C^*$-algebra $C^*(G_n,R_n)$. There are surjective maps $\theta_n:G_n\map G_{n-1}$ given by 

 \begin{eqnarray*}
 \theta_n(x) &:=& x \text{  if $x=0,1$}\\
 \theta_n(w^n_{ij}) &:=& \left\{\begin{array}{ll}
                        w^{n-1}_{ij}&\text{if}~ 1\leq i,j \leq n-1 ,\\
                        \delta_{ij}1 & \text{otherwise}.
                        \end{array} \right.
\end{eqnarray*} making $\{C^*(G_n,R_n),\theta_n\}$ an inverse system of $C^*$-algebras and surjective $*$-homomorphisms. There are obvious sections $s_{n-1}: G_{n-1}\map G_n$ sending $0\mapsto 0$, $1\mapsto 1$ and $w^{n-1}_{ij}\mapsto w^n_{ij}$ giving rise to maps $\gamma_n:G_n\map G=\ilim_n G_n$ as described above (see Equation \eqref{split}). There are surjective $*$-homomorphisms $\pi_n: C^*(G_n,R_n)\map C(SU_q(n))$ for all $n\geqslant 2$ given on the generators by $\pi_n(x)=x$ for $x=0,1$ and $\pi_n(w^n_{ij})=u^n_{ij}$, which produce a morphism of inverse systems $\{\pi_n\}:\{C^*(G_n,R_n)\}\map\{C(SU_q(n))\}$. Indeed, it follows from the Relations \eqref{relations0}, \eqref{relations1} and \eqref{relations2} that the norms of the generators of $C(SU_q(n))$ do not exceed $1$ in any representation. Consequently, there is a surjective $*$-homomorphism of $\sC$-algebras $$\ilim_n\pi_n:\ilim_n C^*(G_n,R_n)\map C(SU_q(\infty)).$$
However, the authors cannot provide a good description of the kernel at the moment. Let us set $G=\ilim_n G_n$ and let $R$ denote the set of relations $\{\gamma_n(G_n) \text{ satisfies $R_n$ for all $n$}\}.$ Note that $\|x\|\leqslant 1$ viewed as a relation for a representation in a pro $C^*$-algebra $B$ means that $p(x)\leqslant 1$ for all $C^*$-seminorms $p$ on $B$. The family of pairs $(G_n,R_n)$ satisfy Hypotheses \eqref{Hypb} and the pair $(G,R)$ is weakly admissible (see Examply 1.3.5.(2) of \cite{P2}), so that the above Theorem applies, i.e., $\ilim_n C^*(G_n, R_n)\cong C^*(G,R)$. As a corollary, we deduce that the elements of $(\ilim_n\pi_n)(G)$ provide explicit generators of $C(SU_q(\infty))$.




\bibliographystyle{abbrv}

\bibliography{/Users/smahanta/Professional/math/MasterBib/bibliography}

\vspace{5mm}
\noindent
\address{}

\end{document}